\documentclass[12pt,reqno]{amsart}
\usepackage{setspace}
\onehalfspacing

\usepackage{amsmath, amscd, graphicx, latexsym, times, }
\usepackage{mathtools}  

\usepackage{tikz-cd}  

\usepackage{xparse}
\markboth
\usepackage{float}
\usepackage{hyperref}
\usepackage{mathtools}

\newtheorem{theorem}{Theorem}[section]
\newtheorem*{theorem-non}{Theorem}
\newtheorem{prop}[theorem]{Proposition}

\newtheorem{remark}[theorem]{Remark}

\newtheorem{cor}[theorem]{Corollary}

\newcommand\C{{\mathbb C}}

\newcommand\CPP{{\mathbb{CP}}^1}

\parskip.05in

\def \M {\mathcal M}
\def \om {\overline {\M}}
\def \Hb {H_{\bullet}}
\def \Cb {H^{\bullet}}

\def \bd {\partial}

\def \Q {\mathbb{Q}}

\def \DD {\Delta}

\def\fg{\mathfrak{g}}


\title[On the BV structure on the cohomology of moduli space]{On the
  BV structure on the cohomology of moduli space}

\dedicatory{Dedicated to Kyoji Saito on the occasion of his 75th
  birthday}

\begin{document}

\author{S\"{u}meyra Sakall{\i}}
\address{Max Planck Institute for Mathematics,
Vivatsgasse 7, 53111 Bonn,
Germany}
\email{sakal008@umn.edu}

\author{Alexander A. Voronov}
\address {School of Mathematics\\University of Minnesota\\
  Minneapolis, MN 55455, USA, and Kavli IPMU (WPI), UTIAS, University
  of Tokyo, Kashiwa, Chiba 277-8583, Japan}
\email{voronov@umn.edu}

\date{December 29, 2019}

\begin{abstract}
The question of vanishing of the BV operator on the cohomology of the
moduli space of Riemann surfaces is investigated. The BV structure,
which comprises a BV operator and an antibracket, is identified,
vanishing theorems are proven, and a counterexample is provided.
\end{abstract}

\subjclass[2010]{14H15 (Primary) 14F17, 32G15 (Secondary)}

\maketitle

\section*{Introduction}
Since the early nineties, there has been considerable interest in
nonperturbative methods in string field theory, in particular, setting
up a \emph{quantum master equation $($QME$)$}
\begin{equation}
  \label{QME}
dS + \hbar \DD S + \frac{1}{2} \{S,S\} = 0
\end{equation}
in the dg-BV-algebra $C_c^{\bullet}(\M)$ of compactly supported
cochains with rational coefficients on the moduli space $\M$ of
Riemann surfaces and describing a solution $S$ of the QME, see, for
example, \cite{Witt,Zw, Sen-Zw, KSV, Eliashberg-Givental-Hofer, Markl,
  CL, Cos, HVZ, Kauf, JM, MuSA, KWZ, DM, CFL}. One work that caught
the eye was that of Kevin Costello \cite{Cos}, in which he used
homotopical-algebraic methods and certain elementary facts about the
cohomology of moduli spaces to prove the existence and uniqueness, up
to homotopy, of a solution $S$ of the QME. The proof consisted in a
clever reduction of the QME to a linear equation
\begin{equation}
  \label{cocycle}
\hat{d} S' = 0
\end{equation}
on a related $S'$ in a homotopy abelian subquotient of
$C_c^{\bullet}(\M)$ for the differential $\hat{d} = d + \hbar \DD$ and
transferring the problem to cohomology.

Note that the QME \eqref{QME} is actually a \emph{Maurer-Cartan
  equation}
\[
\hat{d}S + \frac{1}{2} \{S,S\} = 0
\]
in the dg-Lie algebra of cochains on the moduli space and if this
dg-Lie algebra is homotopy abelian, then the Maurer-Cartan equation is
equivalent to the cocycle equation \eqref{cocycle}. It is known, see
\cite{terilla:smoothness, KKP}, that in a dg-BV-algebra, the homotopy
abelianness of the underlying dg-Lie algebra may be derived from the
degeneration at $E_1$ of a spectral sequence associated to the double
complex $(C_c^{\bullet}(\M), d, \DD)$, which entails the vanishing of
the differential $d_1 = \DD$ on the cohomology $H_c^{\bullet}(\M)$.

Given Costello's homotopy abelianness result, one might hope that
homotopy abelianness takes place for the whole dg-Lie algebra
$C_c^{\bullet}(\M)$ and that the general argument of
\cite{terilla:smoothness, KKP} would imply Costello's existence and
uniqueness result. On the other hand, any general statement, such as
the vanishing of a cohomology operation, $\DD$ in particular, on the
cohomology of the moduli space is extremely interesting, given how
little is still known about it.

This paper grew out of our investigation of the vanishing of $\DD$ on
the cohomology $H_c^{\bullet}(\M)$ of the moduli space. It has turned
out that $\DD$ vanishes on vast ranges of these cohomology groups,
appearing to provide convincing evidence for vanishing everywhere, see
Section \ref{vanishing} below. On the other hand, we have found out
that total vanishing does not actually take place. We provide a
counterexample in Corollary \ref{counter}. In the process, we obtain
useful description of the BV operator $\DD$ and the antibracket $\{-,
-\}$, relating them to the differential $d_1$ in the spectral sequence
associated to the topological filtration on the compactified moduli
space $\om$. This differential may be expressed as the Gysin
homomorphism given by Poincar\'{e} residue, see Section
\ref{identification}. It is somewhat surprising that the differentials
on the first terms of the spectral sequences associated to the double
complex $(C_c^{\bullet}(\M), d, \DD)$ and the topological filtration
on $\om$ are closely related, see Theorem \ref{delta-gysin}.

\subsection*{Conventions}

\begin{sloppypar}
In this paper, $\M_{g,n}$ denotes the (usual) moduli space of smooth,
connected, compact, genus $g$ Riemann surfaces with $n$ labeled
punctures. Here and throughout the paper, we assume that our Riemann
surfaces are \emph{stable}, \emph{i.e}., $n \ge 3$ if $g = 0$ and $n
\ge 1$ if $g = 1$; otherwise $g, n \ge 0$. Let $S_n$ be the group of
permutations of $n$ objects. Then $\M_g (n) := \M_{g,n} / S_n$ is the
moduli space of compact Riemann surfaces with $n$ unlabeled punctures.
\end{sloppypar}
  
We work with homology $H_{\bullet}(X) := H_{\bullet}(X,\Q)$ and
cohomology $H^{\bullet}(X) := H^{\bullet}(X,\Q)$ with rational
coefficients. Vector spaces are also assumed to be over $\Q$.

\subsection*{Acknowledgments}
With admiration and gratitude, we dedicate this paper to Kyoji
Saito. The breadth of his interests touched upon the topics of this
paper as well: his recent paper \cite{LLS} uses the BV-algebra of
multivector fields on a Calabi-Yau manifold, whereas his interest in
the moduli spaces is long-standing, see \cite{saito:moduli,
  saito:higher, saito:the, saito:algebraic}. We are grateful to Weiyan
Chen, Domenico D'Ales\-san\-dro, S\o ren Galatius, and Craig
Westerland for useful discussions. SS also thanks the Max Planck
Institute for Mathematics, Bonn, for its great research environment
and support. AV was supported by the World Premier International
Research Center Initiative (WPI), MEXT, Japan, and a Collaboration
grant from the Simons Foundation (\#585720).

\section{The BV structure on the (co)homology of moduli spaces}

\subsection{Operations on the homology $\Hb (\M_g(n),\Q)$}

The following operations have been originally defined by Zwiebach
\cite{Zw} and modified by Costello \cite{Cos}.

1. For $n \ge 0$, define the \emph{BV operator}
\begin{equation}
\Delta: H_k(\M_g(n+2)) \rightarrow H_{k+1}(\M_{g+1}(n))
\label{delta}
\end{equation}
as follows. If $Z \in H_k(\M_g(n+2))$ is a homology class represented
by a singular cycle $Z = \sum_{i=1}^m c_i \sigma_i$ for $\sigma_i :
\Delta^k \rightarrow \M_g(n+2)$, then set
\[
\Delta(Z) := \sum_i c_i \sum_{\{\beta, \gamma\}} (\text{twist-gluing
  of $\sigma_i$ at punctures $\{\beta, \gamma\}$}\big),
\]
where the summation runs over unordered pairs $\{\beta, \gamma\}$ of
punctures and {\em twist-gluing} is cutting out small holomorphic
disks around the punctures $\{\beta, \gamma\}$, and gluing their
complement(s) along the boundaries. More precisely, we consider
holomorphic disks at each puncture (i.e., holomorphic embeddings of
the standard disk $|z| < 1$ centered at the puncture and not
containing other punctures). Then we cut out the disks $|z| \leq r$
and $|w| \leq r$ for some $r = 1 - \epsilon$ at sewn punctures and
identify the annuli $r < |z| < 1/r$ and $r < |w| < 1/r$ via $w =
e^{it}/z$, with $t$ running over the interval $[0,2\pi]$, and thereby
increasing the degree of the chain $\sigma_i$. This gives a new chain
$S^1 \times \Delta^k \rightarrow \M_{g+1}(n)$, resulting in a new
cycle $\Delta(Z)$. Moreover, the map $\Delta$ is well-defined on
$H_{\bullet}(\M_g(n+2))$, \emph{i.e}., it is independent of the choice
of holomorphic disks involved in twist-gluing (see \cite{Zw, Cos}).



2. Similarly, for $n_1, n_2 \ge 0$, define the \emph{antibracket}
\begin{multline}
\label{bracket}
\{-, -\} : H_{k_1}(\M_{g_1}(n_1+1)) \otimes H_{k_2}
(\M_{g_2}(n_2+1))\\ \rightarrow H_{k_1 + k_2 +1}(\M_{g_1 + g_2}(n_1 +
n_2))
\end{multline}
as follows. If $Z_1 \in H_{k_1}(\M_{g_1}(n_1+1))$ and $Z_2 \in
H_{k_2}(\M_{g_2}(n_2+1))$ are homology classes represented by singular
cycles $Z_1 = \sum_{i=1}^m c_i \sigma_i$ for $\sigma_i : \Delta^{k_1}
\rightarrow \M_{g_1}(n_1+1)$ and $Z_2 = \sum_{j=1}^m d_j \tau_j$ for
$\tau_j : \Delta^{k_2} \rightarrow \M_{g_2}(n_2+1)$, then set
\[
\{Z_1, Z_2\} := \sum_{i,j} c_i d_j \sum_{\beta, \gamma}
\left(\text{twist-gluing of $\sigma_i$ with $\tau_j$ at punctures
  $\beta, \gamma$}\right),
\]
where $\beta$ runs over the punctures of the surface in
$\M_{g_1}(n_1+1)$ and $\gamma$ runs over the punctures of the surface
in $\M_{g_2}(n_2+1)$. This gives a new chain $S^1 \times \Delta^{k_1}
\times \Delta^{k_2} \rightarrow \M_{g_1 + g_2}(n_1 + n_2)$, resulting
in a new cycle $\{Z_1, Z_2\} \in H_{k_1 + k_2+1}(\M_{g_1 + g_2}(n_1 +
n_2))$, independent of the choices made along the way (see ibid.).

The reason why these homology operations are introduced is to set up a
quantum master equation, actually, at the chain, rather than homology
level:
\[
dS + \hbar \Delta S + \frac{1}{2} \{S,S\} = 0
\]
on a formal power series $S = \sum_{\substack{g,n \ge 0\\ 2g - 2 + n >
    0}} S_{g,n} \lambda^{2g-2+n} \hbar^g$ with coefficients $S_{g,n}$
being chains in $C_{6g-6+2n} (\M_g(n))$ and $d$ denoting chain
boundary. However, for the above equation to be sensible, degree
considerations suggest to change grading on chains and homology to
grading by codimension, as well as assume that the formal variables
$\lambda$ and $\hbar$ have degree zero. This change of grading may be
regarded as the application of canonical \emph{Poincar\'e-Lefschetz
  duality}
\[
H_k(\M_g(n)) = H^{6g-6+2n-k}_c (\M_g(n)),
\]
where $H^\bullet_c$ denotes cohomology with compact support. Thus,
under Poincar\'e-Lefschetz duality, the BV operator \eqref{delta} and
antibracket \eqref{bracket} turn into
\begin{equation}
  \label{delta-c}
\Delta: H^k_c (\M_g(n+2)) \rightarrow H^{k+1}_c (\M_{g+1}(n))
\end{equation}
and
\begin{multline}
\label{bracket-c}
\{-, -\} : H^{k_1}_c(\M_{g_1}(n_1+1)) \otimes H^{k_2}_c
(\M_{g_2}(n_2+1))\\ \rightarrow H^{k_1 + k_2 +1}_c (\M_{g_1 + g_2}(n_1
+ n_2)),
\end{multline}
respectively. We hope that, given that Poincar\'e-Lefschetz duality
identifies these operations \eqref{delta-c} and \eqref{bracket-c} with
\eqref{delta} and \eqref{bracket}, respectively, our use of the same
notation, $\Delta$ and $\{-, -\}$, will not create serious confusion.

From \cite{barannikov,Zw} we know that these two operations define the
structure of a dg-Lie algebra with a differential $\Delta$ of degree
$1$ on
\begin{equation}
\label{g}
\fg := \bigoplus_{\substack{g,n \ge 0\\ 2g - 2 + n > 0}}
H^{\bullet}_c(\M_{g}(n))[1],
\end{equation}
where $V[1]$ denotes the desuspension of a graded vector space $V$:
$V[1]^k := V^{k+1}$.

\begin{remark}
\label{physics}
The graded symmetric algebra $S(\fg[-1])$ on the suspension of a
dg-Lie algebra $\fg$ is known to carry the structure of a
\emph{BV-algebra}, see \cite[Example 2.5]{QDT2}, which uses a slightly
different grading convention: $\deg \Delta = -1$ and $\deg \hbar =
2$. In the case of our dg-Lie algebra $\fg$ as in \eqref{g}, the
graded symmetric algebra $S(\fg[-1])$ is isomorphic to the
(co)homology $H^\bullet_c (\M)$ of Zwiebach's dg-BV-algebra
$C^\bullet_c (\M)$ of compactly supported chains in the moduli space
$\M$ of not necessarily connected, closed Riemann surfaces with
unlabeled punctures.
\end{remark}

\subsection{Construction via the real Deligne-Mumford compactification}

Let $\underline{\M}_{g}(n)$ be the real Deligne-Mumford
compactification of $\M_{g}(n)$ to an orbifold with corners (see
\cite{KSV}). It is known that $\underline{\M}_{g}(n)$ is homotopy
equivalent to $\M_{g}(n)$ and $H_{\bullet}(\underline{\M}_{g}(n))
\cong H_{\bullet}(\M_{g}(n))$. In this construction, $\Delta$ uses
twist-attaching (in which there are $S^1$ ways of attaching a pair of
punctures on a Riemann surface $\Sigma \in \underline{\M}_{g}(n)$, or
$S^1$ worth of choices of real rays in the tensor product (over
$\mathbb C$) of the tangent spaces to $\Sigma$ at these two
punctures). The antibracket $\{-,-\}$ uses twist-attaching in a
similar way.

\subsection{Construction via the Deligne-Mumford compactification}
\label{identification}

\subsubsection{The ``topological'' spectral sequence}

Let $\om_{g}(n)$ denote the Deligne-Mumford (DM) compactification of
$\M_{g}(n)$ and $\om_{g}(n)^{(p)}$ be the moduli space of Riemann
surfaces with at least $p$ \emph{double points} (also known as
\emph{nodes}). Then we have a decreasing filtration
\begin{equation*}
\cdots \subset \om_{g}(n)^{(p+1)} \subset \om_{g}(n)^{(p)} \cdots
\subset \om_{g}(n)^{(0)} = \om_{g}(n).
\end{equation*}
It generates a spectral sequence satisfying
\begin{equation}
  \label{hss}
  E^1_{p,q} = H_{p+q} (\om_{g}(n)^{(-p)}, \om_{g}(n)^{(-p+1)}) \Rightarrow
  H_{\bullet}(\om_{g}(n))
\end{equation}
with $E^1_{p,q} = 0$ unless $q \ge -p \ge 0$ and a differential
{\scriptsize \begin{equation*}
d^1_{p,q} : H_{p+q}(\om_{g}(n)^{(-p)}, \om_{g}(n)^{(-p+1)}) \rightarrow
H_{p+q-1}(\om_{g}(n)^{(-p+1)}, \om_{g}(n)^{(-p+2)}),
  \end{equation*}
}given by the boundary map in the long exact sequence of the triple
\linebreak[4] $(\om_{g}(n)^{(-p)}, \linebreak[0] \om_{g}(n)^{(-p+1)},
\om_{g}(n)^{(-p+2)})$.  We will also utilize a \emph{coboundary map},
the linear dual {\scriptsize
     \begin{equation}
       \label{cobdry}
       (d^1_{p,q})^* : H^{p+q-1}_c
       (\om_{g}(n)^{(-p+1)} \setminus \om_{g}(n)^{(-p+2)}) \rightarrow
  H^{p+q}_c (\om_{g}(n)^{(-p)} \setminus \om_{g}(n)^{(-p+1)})
\end{equation}
}of $d^1_{p,q}$ later on.

After applying Poincar\'{e}-Lefschetz duality and a linear change of
variables $p, q$ to the spectral sequence \eqref{hss}, we get a
cohomological spectral sequence with
\begin{equation}
  \label{cohss}
E_1^{p,q} = H^{q-p} (\om_{g}(n)^{(p)} \setminus \om_{g}(n)^{(p+1)})
\Rightarrow H^{\bullet}(\om_{g}(n))
\end{equation}
with $q \ge p \ge 0$ and a differential
{\small \begin{equation*}
d_1^{p,q} : H^{q-p}(\om_{g}(n)^{(p)} \setminus \om_{g}(n)^{(p+1)})
\rightarrow H^{q-p-1}(\om_{g}(n)^{(p+1)} \setminus
\om_{g}(n)^{(p+2)}).
  \end{equation*}
}This is the \emph{Poincar{\'e} residue map} (see \cite{DII, Getz,
  KSV2}), which might be thought of as a Gysin homomorphism, integration along
the boundary of the tubular neighborhood of $\om_{g}(n)^{(p+1)}$
inside $\om_{g}(n)^{(p)}$. The linear dual is
{\footnotesize \begin{equation}
    \label{Gysin-h}
    (d_1^{p,q})^* :
    H_{q-p-1}(\om_{g}(n)^{(p+1)} \setminus \om_{g}(n)^{(p+2)})
    \rightarrow H_{q-p}(\om_{g}(n)^{(p)} \setminus \om_{g}(n)^{(p+1)})
  \end{equation}
}may also be thought of as a Gysin homomorphism in homology.  We will
now demonstrate that the BV operator \eqref{delta} and antibracket
\eqref{bracket} are essentially these Gysin homomorphisms
$(d_1^{p,q})^*$, see \eqref{Gysin-h}, for $p = 0$. This will imply
that the corresponding operators \eqref{delta-c} and \eqref{bracket-c}
on the compactly supported cohomology are essentially the coboundary
homomorphisms $(d^1_{p,q})^*$, see \eqref{cobdry}, for $p = 0$.

\subsubsection{Identification of the BV operator}
Let
\begin{multline*}
  \alpha_p : \Hb(\om_{g}(n+2)^{(p)} \setminus \om_{g}(n+2)^{(p+1)})\\
  \rightarrow \Hb(\om_{g+1}(n)^{(p+1)} \setminus \om_{g+1}(n)^{(p+2)})
\end{multline*}
be the composition $\alpha_p := \pi_*\rho^!$, where $\pi_*$ is the
pushforward of the map $\pi$ attaching the last two punctures and
$\rho^!$ is the transfer map for $\rho$ from the following diagram of
\'etale morphisms:
\[
\begin{tikzcd}
  (\om_{g,n+2}^{(p)} \setminus \om_{g,n+2}^{(p+1)})/S_n \times
  S_2 \arrow[r, "\pi"] \arrow[d, "\rho"] & \om_{g+1}(n)^{(p+1)}
  \setminus \om_{g+1}(n)^{(p+2)} \\ \om_{g}(n+2)^{(p)} \setminus
  \om_{g}(n+2)^{(p+1)},
\end{tikzcd}
\]
in which $\om_{g,n+2}^{(p)}$ is the moduli space of Riemann surfaces
with $n+2$ labeled punctures and at least $p$ nodes, $S_{n}$ permutes
the first $n$ punctures in $\M_{g,n+2}$, whereas $S_2$ permutes the
last two punctures, and $\rho$ forgets the division of the punctures
into two groups: the first $n$ ones and the last two.


Let us consider the particular case $p=0$, where $\om_g(n+2)^{(0)}
\setminus \om_g(n+2)^{(1)} = \M_g(n+2)$ and $\om_{g+1}(n)^{(1)}
\setminus \om_{g+1}(n)^{(2)}$ is the moduli space of Riemann surfaces
with exactly one node. Then we have
\begin{equation*}
\alpha_0: \Hb(\M_g(n+2)) \rightarrow \Hb(\om_{g+1}(n)^{(1)} \setminus
\om_{g+1}(n)^{(2)}),
\end{equation*}
where $\alpha_0 = \pi_*\rho^!$ for the diagram
\begin{equation}
\label{alpha}
\begin{tikzcd}
  \M_{g,n+2}/S_n \times S_2 \arrow[r, "\pi"] \arrow[d, "\rho"]
    & \om_{g+1}(n)^{(1)} \setminus \om_{g+1}(n)^{(2)} \\
 \M_{g}(n+2)
\end{tikzcd}
\end{equation}
of \'etale morphisms.

\begin{theorem}
  \label{delta-gysin}
\begin{enumerate}
\item The BV operator $\Delta$ as in Eq.~\eqref{delta} is equal to the
  composition $(d_1^{0,\bullet})^* \alpha_0$ below:
\begin{multline*}
  \Hb(\M_{g}(n+2)) \xrightarrow{\alpha_0} \Hb(\om_{g+1}(n)^{(1)}
  \setminus \om_{g+1}(n)^{(2)})\\ \xrightarrow{(d_1^{0,\bullet})^*}
  H_{\bullet +1}(\M_{g+1}(n)).
\end{multline*}
\item The BV operator $\Delta$ as in Eq.~\eqref{delta-c} is equal to the
  composition $(d^1_{0,\bullet})^* \alpha_0$ below:
\begin{multline*}
  \Cb_c(\M_{g}(n+2)) \xrightarrow{\alpha_0} \Cb_c(\om_{g+1}(n)^{(1)}
  \setminus \om_{g+1}(n)^{(2)})\\ \xrightarrow{(d^1_{0,\bullet})^*}
  H^{\bullet +1}_c(\M_{g+1}(n)),
\end{multline*}
where $\alpha_0 = \pi_* \rho^!$ is the morphism induced by the \'etale
morphisms $\pi$ and $\rho$ from \eqref{alpha} on cohomology with
compact support.
  \end{enumerate}
\end{theorem}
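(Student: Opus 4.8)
The plan is to give each of the three constituent maps a transparent geometric reading and then to check, at the level of singular chains, that their composition reproduces the twist-gluing formula \eqref{delta} defining $\Delta$. First I would unwind $\alpha_0 = \pi_* \rho^!$. The morphism $\rho$ in \eqref{alpha} is a finite \'etale cover of orbifolds: it forgets which unordered pair of the $n+2$ punctures has been singled out, so its generic fibre has $\binom{n+2}{2}$ points, one for each unordered pair $\{\beta,\gamma\}$. Consequently the transfer $\rho^!$ sends a cycle $Z = \sum_i c_i \sigma_i$ to the sum over all pairs $\{\beta,\gamma\}$ of the corresponding lifted chains. The morphism $\pi$ then attaches the distinguished pair of punctures to create a single non-separating node, raising the genus from $g$ to $g+1$ and mapping isomorphically onto the non-separating part of the one-nodal stratum $\om_{g+1}(n)^{(1)} \setminus \om_{g+1}(n)^{(2)}$ (the residual $\Z/2$ that interchanges the two branches of the node is exactly the $S_2$ already quotiented out). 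Thus $\alpha_0(Z)$ is, term by term, the sum over pairs $\{\beta,\gamma\}$ of the nodal surfaces obtained by gluing $\sigma_i$ at $\beta,\gamma$ --- precisely the indexing set appearing in \eqref{delta}, but living in the nodal locus rather than in $\M_{g+1}(n)$.

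Second, and this is the crux, I would identify the Gysin homomorphism $(d_1^{0,\bullet})^*$ of \eqref{Gysin-h} with the operation that replaces a cycle in the one-nodal stratum by the circle bundle cut out on the boundary of its tubular neighborhood inside $\om_{g+1}(n)$. For this I would invoke the standard local plumbing model of the Deligne--Mumford compactification near a node: in suitable local coordinates $(z,w)$ the smoothings form the family $zw = s$, with $s = 0$ the node and $s \neq 0$ a smooth neck, and the normal line to the one-nodal stratum is canonically the tensor product of the tangent lines of the two branches. Writing $s = \varepsilon e^{it}$, the relation $zw = s$ reads $w = \varepsilon e^{it}/z$, which after rescaling one branch is exactly the twist-gluing prescription $w = e^{it}/z$ of \eqref{delta} with twist angle $t$ and neck size governed by $\varepsilon$. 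Hence the boundary circle $\{|s| = \varepsilon\}$ of the tubular neighborhood is canonically the $S^1$ of twist-gluings, and the Gysin map --- integration over this circle, raising homological degree by one --- sends a nodal cycle to the cycle $S^1 \times \Delta^k \rightarrow \M_{g+1}(n)$ of its twist-gluings.

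Combining the two steps yields, term by term, that $(d_1^{0,\bullet})^* \alpha_0(Z)$ equals $\sum_i c_i \sum_{\{\beta,\gamma\}}$ of the $S^1$ of twist-gluings of $\sigma_i$ at $\beta,\gamma$, which is the defining expression for $\Delta(Z)$; this proves part (1). Part (2) then follows by Poincar\'e--Lefschetz duality: it identifies the homology operation \eqref{delta} with its compactly supported avatar \eqref{delta-c}, sends the Gysin map $(d_1^{0,\bullet})^*$ to the coboundary $(d^1_{0,\bullet})^*$ of \eqref{cobdry}, and sends $\alpha_0 = \pi_*\rho^!$ to the compactly supported $\alpha_0$ induced by the \'etale morphisms $\pi,\rho$, so the dual of the identity of part (1) is exactly the identity asserted in part (2).

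I expect the main obstacle to lie in the second step: pinning down orientation and phase conventions so that the residue circle of the Gysin map is identified with the twist-gluing $S^1$ on the nose (not merely up to homotopy or sign), and confirming that the excised disks $|z| \le r$, $|w| \le r$ of \eqref{delta} may be taken inside a single plumbing chart so that the two neck constructions literally coincide. A secondary point to check carefully is that the orbifold automorphisms are handled consistently across all three maps --- in particular that the branch-swapping $\Z/2$ absorbed into $\rho$ matches the symmetry of the normal line used to define the Poincar\'e residue --- so that no spurious factors of two enter.
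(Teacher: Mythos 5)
Your proposal is correct and follows essentially the same route as the paper's proof: unwind $\rho^!$ as the sum over unordered pairs of punctures, $\pi_*$ as attaching them to form a node, and $(d_1^{0,\bullet})^*$ as the umkehr map to the boundary $S^1$-bundle of the tubular neighborhood of the one-nodal stratum, which is built from twist-gluings. Your explicit plumbing-coordinate computation $zw = \varepsilon e^{it}$ and the attention to the branch-swapping $\Z/2$ supply details the paper only asserts, so the argument is, if anything, more complete than the original.
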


\begin{proof}
Claims (1) and (2) are equivalent by Poincar\'e-Lefschetz duality. To
prove claim (1), let us trace what happens under the maps $\rho^!$ and
$\pi_*$ mapking up the map $\alpha_0$, as well as under the map
$(d_1^{0,\bullet})^*$. Given a singular chain of smooth Riemann
surfaces of genus $g$ with $n+2$ unlabeled punctures, the transfer map
$\rho^!$ sums up all possible ways of picking a pair of punctures,
whereas the map $\pi^*$ attaches these two punctures to form a node
and thereby places this chain within the part $\om_{g+1}(n)^{(1)}
\setminus \om_{g+1}(n)^{(2)}$ of the Deligne-Mumford compactification
$\om_{g+1}(n)$ which corresponds to stable curves with exactly one
node. This is exactly what the BV operator $\Delta$ does, except that
twist-gluing at the chosen pair of punctures is replaced so far with
attaching. Now, the map $(d_1^{0,\bullet})^*$ in homology linear dual
to the Poincar\' e residue map $d_1^{0,\bullet}$ in cohomology is the
umkehr map, which associates to a cycle in $\om_{g+1}(n)^{(1)}
\setminus \om_{g+1}(n)^{(2)}$ its pre-image in the boundary of the
tubular neighborhood of $\om_{g+1}(n)^{(1)} \setminus
\om_{g+1}(n)^{(2)}$ inside $\om_{g+1}(n)$. The tubular neighborhood
forms an $S^1$-bundle over $\om_{g+1}(n)^{(1)} \setminus
\om_{g+1}(n)^{(2)}$ and may locally be built out of Riemann surfaces
obtained by twist-gluing at the node of a surface in
$\om_{g+1}(n)^{(1)} \setminus \om_{g+1}(n)^{(2)}$. Thus, composing the
map $(d_1^{0,\bullet})^*$ sends the homology class of stable curves
with one node to the homology class obtained by twist-gluing of stable
curves at the node. This reconciles the composition
$(d_1^{0,\bullet})^* \alpha_0$ with $\Delta$.
\end{proof}

\subsubsection{Identification of the antibracket}

Given that the antibracket has the same nature as the BV operator and
is, namely, the \emph{derived bracket} for the BV operator $\Delta$ on
$S(\fg[-1]) \cong H_\bullet (\M)$, see Remark \ref{physics}, it is not
surprising that there is a similar identification of the antibracket
via the Gysin homomorphisms, coming from the topological spectral sequences.

A diagram analogous to diagram \eqref{alpha}, which defines the
homomorphism $\alpha_0$, is the following pair of \'etale morphisms:
{\scriptsize
  \begin{equation}
    \label{beta}
\begin{tikzcd}
  \M_{g_1,n_1+1}/S_{n_1} \times \M_{g_2,n_2+1}/ S_{n_2}
  \arrow[r, "\pi"] \arrow[d, "\rho"]
& \om_{g_1+g_2}(n_1+n_2)^{(1)}
  \setminus \om_{g_1+g_2}(n_1+n_2)^{(2)}\\
\M_{g_1}(n_1+1) \times \M_{g_2}(n_2+1).
\end{tikzcd}
\end{equation}
}Here $S_{n_i}$ permutes the first $n_i$ punctures in $\M_{g_i,n_i+1}$,
$i = 1, 2$, and $\pi$ attaches the last, $n_1+1$st puncture on the
Riemann surface representing a point in $\M_{g_1,n_1+1}/S_{n_1}$ to
the last, $n_2+1$st puncture on the Riemann surface representing a
point in $ \M_{g_2,n_2+1}/ S_{n_2}$. A diagram analogous to the one
defining $\alpha_p$ may be written down similarly, but we are skipping
it for the sake of simplicity. Define a map
\begin{multline*}
\beta_0: \Hb(\M_{g_1}(n_1+1)) \otimes
\Hb(\M_{g_2}(n_2+1))\\ \rightarrow \Hb( \om_{g_1+g_2}(n_1+n_2)^{(1)}
\setminus \om_{g_1+g_2}(n_1+n_2)^{(2)})
\end{multline*}
as $\beta_0 = \pi_*\rho^!$ for $\pi$ and $\rho$ from the previous
diagram. The following theorem is proven exactly in the same way as Theorem
\ref{delta-gysin}.

\begin{theorem}
\begin{enumerate}
\item
The antibracket $\{-, -\}$ as in Eq.~\eqref{bracket} is equal to the
composition $(d_1^{0,\bullet})^* \beta_0$ below:
\begin{multline*}
\Hb(\M_{g_1}(n_1+1)) \otimes
\Hb(\M_{g_2}(n_2+1))\\ \xrightarrow{\beta_0} \Hb(
\om_{g_1+g_2}(n_1+n_2)^{(1)} \setminus
\om_{g_1+g_2}(n_1+n_2)^{(2)})\\ \xrightarrow{(d_1^{0,\bullet})^*}
H_{\bullet +1}(\M_{g_1+g_2}(n_1+n_2)).
\end{multline*}

\item The antibracket $\{-, -\}$ as in Eq.~\eqref{bracket-c} is equal
  to the composition $(d^1_{0,\bullet})^* \beta_0$ below:
\begin{multline*}
\Cb_c(\M_{g_1}(n_1+1)) \otimes
\Cb_c(\M_{g_2}(n_2+1))\\ \xrightarrow{\beta_0} \Cb_c (
\om_{g_1+g_2}(n_1+n_2)^{(1)} \setminus
\om_{g_1+g_2}(n_1+n_2)^{(2)})\\ \xrightarrow{(d^1_{0,\bullet})^*}
H^{\bullet +1}_c (\M_{g_1+g_2}(n_1+n_2)),
\end{multline*}
where $\beta_0 = \pi_* \rho^!$ is the morphism induced by the \'etale
morphisms $\pi$ and $\rho$ from \eqref{beta} on cohomology with
compact support.
  \end{enumerate}
\end{theorem}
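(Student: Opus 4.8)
The plan is to follow the proof of Theorem~\ref{delta-gysin} almost verbatim, replacing the single-surface geometry of diagram~\eqref{alpha} by the two-surface geometry of diagram~\eqref{beta}. As there, claims (1) and (2) are equivalent under Poincar\'e-Lefschetz duality, so it suffices to prove the homology statement (1). I would then trace a product class $Z_1 \otimes Z_2$, represented by chains $\sigma_i \times \tau_j$, through the three maps $\rho^!$, $\pi_*$, and $(d_1^{0,\bullet})^*$, and compare the outcome with the definition of $\{Z_1, Z_2\}$ in~\eqref{bracket}.

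First I would unwind $\beta_0 = \pi_* \rho^!$. The transfer $\rho^!$ for the \'etale cover $\rho$ in~\eqref{beta}, which forgets the choice of distinguished last puncture on each factor, sums over all ways of singling out one puncture $\beta$ on the first surface and one puncture $\gamma$ on the second. The pushforward $\pi_*$ then attaches $\beta$ to $\gamma$, producing a connected stable curve of arithmetic genus $g_1 + g_2$ with $n_1 + n_2$ unlabeled punctures and exactly one (separating) node. Thus $\beta_0$ lands the chain in the one-node stratum $\om_{g_1+g_2}(n_1+n_2)^{(1)} \setminus \om_{g_1+g_2}(n_1+n_2)^{(2)}$ and reproduces precisely the summation over pairs $\beta, \gamma$ in~\eqref{bracket}, with attaching in place of twist-gluing.

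Next I would identify $(d_1^{0,\bullet})^*$ with the umkehr map dual to the Poincar\'e residue, exactly as in Theorem~\ref{delta-gysin}. It sends a cycle in the one-node stratum to its preimage under the $S^1$-bundle projection from the boundary of the tubular neighborhood of that stratum inside $\om_{g_1+g_2}(n_1+n_2)$. Since this neighborhood is locally built from surfaces obtained by twist-gluing at the node, the composite $(d_1^{0,\bullet})^* \beta_0$ upgrades the attaching performed by $\beta_0$ to genuine twist-gluing and raises homological degree by one, which is exactly $\{-,-\}$.

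The only genuinely new feature, and the step I would handle with the most care, is the bookkeeping forced by the product source in~\eqref{beta}: here $\rho^!$ is an external transfer picking one puncture from each factor independently, and the resulting node is separating rather than non-separating. I would verify that the local $S^1$-bundle structure of the tubular neighborhood, and its identification with the twist parameter, are insensitive to whether the node separates the curve, so that the umkehr-map argument applies word for word. Granting this, the remaining checks are formally identical to those in the proof of Theorem~\ref{delta-gysin}.
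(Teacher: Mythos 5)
Your proposal is correct and follows exactly the route the paper intends: the paper's entire proof of this theorem is the single remark that it ``is proven exactly in the same way as Theorem \ref{delta-gysin},'' and your argument is precisely that adaptation, tracing $\rho^!$, $\pi_*$, and the umkehr map $(d_1^{0,\bullet})^*$ for the two-surface diagram \eqref{beta}. Your extra care about the external transfer over the product and the separating node is a sensible elaboration of details the paper leaves implicit, not a departure from its approach.
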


\section{Vanishing results for the BV operator and antibracket}
\label{vanishing}

\subsection{The genus $g=0$ case}
Let us consider the case $g=0$ first, as the BV operator and
antibracket vanish on the moduli space of genus-zero Riemann surfaces
for the trivial reason of it being rationally acyclic.

\begin{theorem}
  \label{genus-zero}
  \[
  H^k (\M_0(n)) =
  \begin{cases}
\Q,\qquad \text{if $k=0$},\\
0,\qquad \text{otherwise}.
\end{cases}
\]
\end{theorem}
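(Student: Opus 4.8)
The plan is to reduce the statement to a vanishing assertion for the trivial $S_n$-isotypic component of the cohomology of the \emph{labeled} moduli space, and then to feed in the known $S_n$-module structure of that cohomology. First I would pass from unordered to ordered punctures. By definition $\M_0(n) = \M_{0,n}/S_n$, and $\M_{0,n}$ is a smooth connected variety on which $S_n$ acts with finite stabilizers, so the transfer gives, with rational coefficients,
\[
H^\bullet(\M_0(n),\Q) \cong H^\bullet(\M_{0,n},\Q)^{S_n}.
\]
Since $\M_{0,n}$ is nonempty and connected for $n \ge 3$, the degree-zero part already contributes the asserted $\Q$. Thus the whole theorem is equivalent to the assertion that the trivial representation of $S_n$ does not occur in $H^k(\M_{0,n},\Q)$ for any $k \ge 1$; equivalently, that the $S_n$-invariant Poincar\'e series of $\M_{0,n}$ is the constant $1$.

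The main step is this vanishing of invariants. Fixing three punctures at $0,1,\infty$ identifies $\M_{0,n}$ with the arrangement complement $\mathrm{Conf}_{n-3}(\C\setminus\{0,1\})$, a rational $K(\pi,1)$ whose cohomology is an Orlik--Solomon algebra; the $S_n$-module structure of $H^\bullet(\M_{0,n},\Q)$ is explicitly known (Lehrer--Solomon, Getzler). I would package it through the $S_n$-equivariant generating function $\sum_n \operatorname{ch} H^\bullet(\M_{0,n},\Q)$ and extract the multiplicity of the trivial representation in each degree by the appropriate plethystic computation, the output being that this multiplicity is $1$ in degree $0$ and $0$ otherwise. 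This character computation is where the real content lies and is the step I expect to be the main obstacle: the reduction in the previous paragraph is purely formal, whereas the vanishing of all higher invariants encodes genuine information about how $S_n$ permutes the Orlik--Solomon generators.

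An alternative, more geometric route realizes the same obstacle differently. Writing $\M_0(n) = \mathrm{UConf}_n(\CPP)/PGL_2(\C)$, where $\mathrm{UConf}_n(\CPP)$ is the space of $n$ distinct unordered points, i.e.\ the complement of the discriminant hypersurface in $\mathbb{P}^n$, and using that $PGL_2(\C)$ acts with finite stabilizers for $n \ge 3$ and is rationally homotopy equivalent to $S^3$, one obtains a rational $S^3$-fibration $\mathrm{UConf}_n(\CPP)\to\M_0(n)$. Feeding the cohomology of unordered configurations on the $2$-sphere into the associated Gysin sequence then forces $H^\bullet(\M_0(n),\Q)=\Q$. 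This merely transfers the difficulty to computing $H^\bullet(\mathrm{UConf}_n(\CPP),\Q)$, an equivalent but equally substantial computation, confirming that the crux of the theorem is the configuration-space/character input rather than any of the surrounding formal manipulations.
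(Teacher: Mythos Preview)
Your plan is sound and both routes are correct, but neither matches the paper's. The paper does not argue via the $S_n$-character of $H^\bullet(\M_{0,n})$ or via spherical configurations; it simply invokes Arnold's theorem that the rational cohomology of the braid group $B_m$ --- equivalently, of $\mathrm{UConf}_m(\C)$ --- is $\Q$ in degrees $0$ and $1$ and zero otherwise, and cites Looijenga's observation that this already forces $H^\bullet(\M_0(n),\Q)=\Q$ (with Westerland mentioned for an alternative argument). The bridge from Arnold's result to $\M_0(n)$ is close in spirit to your second route, but with $\C$ in place of $\CPP$ and the affine group (rationally an $S^1$) in place of $PGL_2$ (rationally an $S^3$): one passes from $\mathrm{UConf}_m(\C)$ to $\M_{0,m+1}/S_m$ by a rational circle quotient, and a short Gysin argument kills the single degree-$1$ class. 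The payoff of the paper's route is that Arnold's planar braid computation is lighter than either Getzler's full equivariant Poincar\'e polynomial or the cohomology of the discriminant complement in $\mathbb P^n$; the payoff of your first route is that it determines the entire $S_n$-module $H^\bullet(\M_{0,n},\Q)$ along the way, not just its invariant part.
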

This theorem follows from Arnold's computation \cite{Arn, Bri} of the
rational cohomology of the braid group, as observed by Looijenga
\cite{Loo1}. See also a different argument of Westerland
\cite{West}. We also need the following classical result.

\begin{theorem}[Mumford \cite{Mum}]
  $  H^1 (\M_{1,n}) = 0$.
\end{theorem}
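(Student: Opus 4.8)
The plan is to fibre $\M_{1,n}$ over $\M_{1,1}$ and extract $H^1$ from the Leray spectral sequence, thereby reducing the statement to two facts: that $\mathrm{SL}_2(\Z)$ has no positive-degree rational cohomology, and that the fibre cohomology carries no monodromy invariants in degree $1$. Concretely, normalizing the first puncture to be the origin of the elliptic curve identifies $\M_{1,n}$ with the total space of a fibration $p\colon \M_{1,n}\to\M_{1,1}$ whose fibre over a curve $E$ is the configuration space $\mathrm{Conf}_{n-1}(E\setminus\{0\})$ of $n-1$ distinct nonzero points. I would run the spectral sequence $E_2^{a,b}=H^a(\M_{1,1},R^bp_*\Q)\Rightarrow H^{a+b}(\M_{1,n})$ and track only the two groups contributing to total degree $1$, namely $E_2^{1,0}$ and $E_2^{0,1}$.

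For the base I would use that $H^\bullet(\M_{1,1},\Q)=H^\bullet(\mathrm{SL}_2(\Z),\Q)$: since $\M_{1,1}=[\mathbb H/\mathrm{SL}_2(\Z)]$ with $\mathbb H$ contractible, its rational cohomology is the group cohomology of $\mathrm{SL}_2(\Z)$. Because $\mathrm{PSL}_2(\Z)\cong \Z/2 * \Z/3$ is virtually free and the central $\Z/2$ acts trivially on $\Q$, one gets $H^i(\mathrm{SL}_2(\Z),\Q)=0$ for all $i\ge 1$; in particular $E_2^{1,0}=H^1(\M_{1,1},\Q)=0$, which also settles the base case $n=1$. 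If one prefers to avoid orbifold cohomology, the same conclusion follows after rigidifying by a level-$N$ structure, $N\ge 3$, and taking $\mathrm{SL}_2(\Z/N)$-invariants, which is exact with $\Q$-coefficients.

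The heart of the argument, and the step I expect to be the main obstacle, is computing the fibre cohomology as an $\mathrm{SL}_2(\Z)$-module and showing $E_2^{0,1}=H^0(\M_{1,1},R^1p_*\Q)=0$. I would establish
\[
H^1\big(\mathrm{Conf}_{n-1}(E\setminus\{0\}),\Q\big)\;\cong\;V^{\oplus(n-1)},\qquad V:=H^1(E,\Q),
\]
the standard $2$-dimensional representation of $\mathrm{SL}_2(\Z)$. The essential point is that the diagonal ``linking'' classes $G_{ij}$, which a priori sit in degree $\dim_{\R}(E)-1=1$, are \emph{killed}: in the Gysin/residue sequence for removing a diagonal $\Delta_{ij}$ the connecting map sends the generator of $H^0(\Delta_{ij})(-1)$ to the cycle class $[\Delta_{ij}]$, whose $V\otimes V$ K\"unneth component is the symplectic form and hence is nonzero, so no new weight-$2$ degree-$1$ class survives; the same phenomenon makes the puncture at $0$ contribute nothing to $H^1$. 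This is exactly where positive genus is used, and is what distinguishes the situation from genus $0$. Granting this, $H^1$ of the fibre is a direct sum of copies of $V$ and therefore has no $\mathrm{SL}_2(\Z)$-invariants, so $E_2^{0,1}=(V^{\oplus(n-1)})^{\mathrm{SL}_2(\Z)}=0$. With both $E_2^{1,0}$ and $E_2^{0,1}$ vanishing, the only possible graded pieces of $H^1(\M_{1,n},\Q)$ in the Leray filtration are zero, and we conclude $H^1(\M_{1,n},\Q)=0$.
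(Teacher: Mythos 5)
Your argument is correct in outline, but it takes a genuinely different route from the paper, which gives no proof at all and simply cites Mumford \cite{Mum}. Mumford's argument is group-theoretic: since $H^1(\M_{1,n},\Q)=\mathrm{Hom}(H_1(\Gamma_{1,n}),\Q)$ for the mapping class group $\Gamma_{1,n}$, it suffices to show the abelianization is torsion, which follows because all Dehn twists about nonseparating curves are conjugate (hence equal in $H_1(\Gamma_{1,n})$) and the relations of the group force that common class to have finite order. That route is shorter and avoids configuration spaces entirely; yours is longer but proves more, namely it identifies the whole local system $R^1p_*\Q\cong V^{\oplus(n-1)}$ and isolates exactly where positive genus enters. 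Two remarks on your version. First, the step you rightly flag as the crux --- $H^1(\mathrm{Conf}_{n-1}(E\setminus\{0\}),\Q)\cong V^{\oplus(n-1)}$ --- is true and your mechanism is the correct one: for $X=E\setminus\{0\}$ one has $H^2(X\times X)=H^1(X)\otimes H^1(X)$, and the Gysin image of $H^0(\Delta_{ij})(-1)$ is the restriction of $[\Delta_E]$, whose only surviving K\"unneth component is the nonzero symplectic form, so the linking classes are killed; but you should actually run the induction on the number of points, since for $k\ge 2$ the fibre $E\setminus\{0,x_1,\dots,x_{k-1}\}$ of the next forgetful map contributes several puncture classes and each requires the same diagonal-class argument. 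Second, you do not need the full structure of $V$ as an $\mathrm{SL}_2(\Z)$-representation to conclude $E_2^{0,1}=0$: the elliptic involution $-I$ already acts on $V=H^1(E)$ by $-1$, so $V^{\mathrm{SL}_2(\Z)}=0$ --- the very trick the paper uses for $R^1\pi_*\Q$ over $\M_2$ in the proof of Proposition \ref{nonvan}.
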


These results imply the desired vanishing.

\begin{cor}
$1$.  The BV operator
\[
H_k(\M_{0}(n+2)) \xrightarrow{\Delta} H_{k+1}(\M_{1}(n))
\]
vanishes for all $k \ge 0$, $n \ge 1$. In cohomology with compact
support,
\[
H^l_c(\M_{0}(n+2)) \xrightarrow{\Delta} H^{l+1}_c(\M_{1}(n))
\]
vanishes for all $l \ge 0$, $n \ge 1$.
$2$. The antibracket
{\small
\[
H_{k_1}(\M_{0}(n_1+1)) \otimes H_{k_2} (\M_{0}(n_2+1))
\linebreak[0] \xrightarrow {\{-, -\}} H_{k_1 + k_2 +1}(\M_{0}(n_1 + n_2))
\]
}vanishes for $k_1, k_2 \ge 0$, $n_1, n_2 \ge 1$. In cohomology with
compact support, the antibracket {\small
\[
H_c^{l_1}(\M_{0}(n_1+1)) \otimes H_c^{l_2} (\M_{0}(n_2+1))
\linebreak[0] \xrightarrow {\{-, -\}} H_c^{l_1 + l_2 +1}(\M_{0}(n_1 + n_2))
\]
}vanishes when $l_1, l_2 \ge 0$, $n_1, n_2 \ge 1$.
\end{cor}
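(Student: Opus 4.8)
The plan is to reduce everything to the two cohomology computations just quoted, together with two elementary facts over $\Q$: that rational homology is dual to rational cohomology, so one vanishes iff the other does; and that for the finite quotient $\M_1(n) = \M_{1,n}/S_n$ one has the transfer isomorphism $H^\bullet(\M_1(n),\Q) \cong H^\bullet(\M_{1,n},\Q)^{S_n}$. From Theorem~\ref{genus-zero} I would first deduce that $H_k(\M_0(m)) = 0$ for every $k \geq 1$ (whenever $\M_0(m)$ is stable, i.e.\ $m \geq 3$; otherwise the space is empty and its homology vanishes as well). From Mumford's theorem together with the transfer isomorphism I would deduce $H^1(\M_1(n)) = H^1(\M_{1,n})^{S_n} = 0$, and hence $H_1(\M_1(n)) = 0$ by duality over $\Q$.

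Given these inputs, claim 1 follows by a degree split. The BV operator $\Delta \colon H_k(\M_0(n+2)) \to H_{k+1}(\M_1(n))$ has vanishing domain when $k \geq 1$ (genus-zero acyclicity) and vanishing target $H_1(\M_1(n))$ when $k = 0$ (Mumford plus transfer). Either way $\Delta = 0$, for all $n \geq 1$ and all $k \geq 0$.

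Claim 2 I would settle purely on the target side. Because $k_1, k_2 \geq 0$, the output degree $k_1 + k_2 + 1$ is at least $1$, so the target $H_{k_1+k_2+1}(\M_0(n_1+n_2))$ is positive-degree rational homology of a genus-zero moduli space; this is zero by genus-zero acyclicity when $n_1 + n_2 \geq 3$, and is zero for the trivial reason of emptiness when $n_1 + n_2 = 2$. Hence the antibracket vanishes identically on the stated range.

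Finally, the compactly supported statements in both claims follow at once: Poincar\'e-Lefschetz duality identifies \eqref{delta-c} and \eqref{bracket-c} with \eqref{delta} and \eqref{bracket} as isomorphisms of the relevant groups, so vanishing of the homological operators forces vanishing of the compactly supported ones. There is no serious obstacle here; the only step demanding a little care is the passage from the labeled space $\M_{1,n}$ to the quotient $\M_1(n)$ via the rational transfer isomorphism, together with the routine bookkeeping of exactly which moduli spaces in the given ranges are stable versus empty.
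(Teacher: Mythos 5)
Your proposal is correct and follows essentially the intended argument: the paper offers no written proof beyond ``These results imply the desired vanishing,'' and the details you supply --- the degree split for the BV operator (genus-zero acyclicity kills the domain for $k\ge 1$, Mumford's theorem plus rational transfer kills the target $H_1(\M_1(n))$ for $k=0$), the target-side argument for the antibracket, and Poincar\'e--Lefschetz duality for the compactly supported versions --- are exactly the ones the authors leave implicit. No gaps.
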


\subsection{The genus $g>0$ case}

From \cite{Harer} we have the following bounds on homology:
$$ H_k(\M_{g,n})=0 \quad \text{ for } \quad
\begin{cases}
g=0,\; k>n-3,\\
n=0,\; k>4g-5,\\
n>0,\; g>0,\;k>4g-4+n.
\end{cases}
$$ 
Equivalently,
$$ H_c^l(\M_{g,n})=0 \quad \text{ for } \quad
\begin{cases}
g=0,\; l<n-3,\\
n=0,\; l<2g-1,\\
n>0,\; g>0,\; l<2g-2+n.
\end{cases}
$$

\noindent These above bounds imply the following. For the BV operator
\begin{center}
$H_k(\M_{g}(n+2)) \xrightarrow{\Delta}
H_{k+1}(\M_{g+1}(n))$ 
\end{center}
as in Equation \eqref{delta}, the left-hand side vanishes for
$k>4g-2+n$, $n \ge 0$, while the right-hand side is zero for
$k>4g-1+n$, if $n >0$, and $k > 4g-2$, if $n = 0$. On the other hand,
for the antibracket {\small
\[
H_{k_1}(\M_{g_1}(n_1+1)) \otimes H_{k_2} (\M_{g_2}(n_2+1))
\linebreak[0] \xrightarrow {\{-, -\}} H_{k_1 + k_2 +1}(\M_{g_1 +
  g_2}(n_1 + n_2))
\]
}as in Equation \eqref{bracket}, the left-hand side is zero for $k_1 >
4g_1-3+n_1$ or $k_2 > 4g_2-3+n_2$, but the right-hand side vanishes
for $k_1 + k_2 > 4g_1+4g_2-5+n_1+n_2$, if $n_1+n_2 > 0$, and $k_1 +
k_2 > 4g_1+4g_2-6$, if $n_1 + n_2 = 0$. Therefore, we have

\begin{theorem}
$1$. The BV operator
\[
H_k(\M_{g}(n+2)) \xrightarrow{\Delta} H_{k+1}(\M_{g+1}(n))
\]
vanishes for $k>4g-2+n$, $g > 0$, $n \ge 0$. In cohomology with
compact support,
\[
H^l_c(\M_{g}(n+2)) \xrightarrow{\Delta} H^{l+1}_c(\M_{g+1}(n))
\]
vanishes for $l<2g+n$, $g > 0$, $n \ge 0$. In particular, $\Delta = 0$
on compactly supported cohomology as $g \rightarrow \infty$ or $n \to
\infty$ and $l$ being fixed, \emph{i.e}., stably

$2$. The antibracket
{\small
\[
H_{k_1}(\M_{g_1}(n_1+1)) \otimes H_{k_2} (\M_{g_2}(n_2+1))
\linebreak[0] \xrightarrow {\{-, -\}} H_{k_1 + k_2 +1}(\M_{g_1 +
  g_2}(n_1 + n_2))
\]
}vanishes for $k_1 > 4g_1-3+n_1$ or $k_2 > 4g_2-3+n_2$, $g_1, g_2 >0$
and $n_1, n_2 \ge 0$. In cohomology with compact support, the
antibracket {\small
\[
H_c^{l_1}(\M_{g_1}(n_1+1)) \otimes H_c^{l_2} (\M_{g_2}(n_2+1))
\linebreak[0] \xrightarrow {\{-, -\}} H_c^{l_1 + l_2 +1}(\M_{g_1 +
  g_2}(n_1 + n_2))
\]
}vanishes when $l_1 < 2g_1 -1+n_1$ or $l_2 < 2g_2 -1+n_2$, $g_1, g_2
>0$ and $n_1, n_2 \ge 0$.
\end{theorem}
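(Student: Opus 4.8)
The plan is to deduce all four vanishing assertions directly from the Harer bounds displayed just above the theorem, using two elementary observations. First, since $\M_g(n) = \M_{g,n}/S_n$ and we work with rational coefficients, $H_{\bullet}(\M_g(n),\Q) \cong H_{\bullet}(\M_{g,n},\Q)^{S_n}$ is a subspace of $H_{\bullet}(\M_{g,n},\Q)$; hence Harer's vanishing ranges for the labeled spaces $\M_{g,n}$ transfer verbatim to the unlabeled spaces $\M_g(n)$. Second, any linear map out of a zero space is itself zero, so it suffices to locate the range where the \emph{source} of each operation vanishes.

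For the BV operator $\Delta\colon H_k(\M_g(n+2)) \to H_{k+1}(\M_{g+1}(n))$, I would substitute $(g, n+2)$ into the third Harer case (valid since $g > 0$ and $n + 2 > 0$), obtaining $H_k(\M_g(n+2)) = 0$ for $k > 4g - 4 + (n+2) = 4g - 2 + n$. This already forces $\Delta = 0$ in that range, which is exactly the claimed threshold; the target bound computed in the paragraph preceding the theorem gives a larger threshold and so plays no role (for $n = 0$ the two coincide). For the antibracket, the source is the tensor product $H_{k_1}(\M_{g_1}(n_1+1)) \otimes H_{k_2}(\M_{g_2}(n_2+1))$, which vanishes as soon as either factor does. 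Substituting $(g_i, n_i + 1)$ into Harer gives $H_{k_i}(\M_{g_i}(n_i+1)) = 0$ for $k_i > 4g_i - 3 + n_i$, so the bracket vanishes whenever $k_1 > 4g_1 - 3 + n_1$ or $k_2 > 4g_2 - 3 + n_2$, as asserted.

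Finally, I would translate each homological statement into its compactly supported counterpart through the Poincar\'e--Lefschetz duality $H_k(\M_g(n)) = H_c^{6g-6+2n-k}(\M_g(n))$ recorded earlier. For the BV source $\M_g(n+2)$ this reads $H_k = H_c^{6g-2+2n-k}$, so setting $l = 6g - 2 + 2n - k$ turns $k > 4g - 2 + n$ into $l < 2g + n$; likewise, for $\M_{g_i}(n_i+1)$ one has $H_{k_i} = H_c^{6g_i-4+2n_i-k_i}$, and $l_i = 6g_i - 4 + 2n_i - k_i$ turns the bracket conditions into $l_i < 2g_i - 1 + n_i$. I do not expect a genuine obstacle here: the statement is in essence a corollary of Harer's bounds, and the only points needing care are the justification of the labeled-to-unlabeled transfer via $S_n$-invariants and the bookkeeping of the dimension shifts in the duality, both of which are routine.
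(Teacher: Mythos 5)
Your proposal is correct and follows essentially the same route as the paper: the paragraph preceding the theorem derives exactly these source-vanishing ranges from Harer's bounds (implicitly using the $S_n$-invariants identification for rational coefficients) and then converts them via Poincar\'e--Lefschetz duality, which is precisely your argument.
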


Likewise, homological stability implies the vanishing of the BV
operator and antibracket within the \emph{stable range} $k <
\frac{2}{3}(g-1)$. Indeed, Harer's stability theorem
\cite{HarerStab}, as improved by Ivanov
\cite{IvaCx, IvaStab, IvaOnt},
Harer himself \cite{HarerPrep}, Boldsen \cite{Bold},
and Randal-Williams \cite{RW}:
\[
H^k (\M_g) \cong H^k (\M_{g+1}) \qquad \text{for $k < \frac{2}{3}(g-1)$},
\]
combined with Madsen and Weiss's proof \cite{MW} of
Mumford's conjecture, stating that
\[
\Q[\kappa_1, \kappa_2, \dots] \to H^\bullet (\M_g),
\]
where $\kappa_i \in H^{2i}(\M_g)$ is the $i$th ``tautological''
$\kappa$ class, $i = 1, 2, \dots$, is an isomorphism in degree $\le
\frac{2}{3}(g-1)$, and Looijenga's relation \cite{Loo} with the case of pointed
Riemann surfaces, which asserts that
\[
H^\bullet (\M_g)[\psi_1, \psi_2, \dots, \psi_n] \to H^\bullet (\M_{g,n}),
\]
where $\psi_i \in H^2(M_g)$ is the $i$th ``tautological'' $\psi$
class, $i = 1, \dots, n$, is an isomorphism in degree $\le
\frac{2}{3}(g-1)$, implies that
\[
\Q[\psi_1, \dots, \psi_n, \kappa_1, \kappa_2, \dots] \to H^\bullet (\M_{g,n})
\]
is an isomorphism in degree $\le \frac{2}{3}(g-1)$. This, in
particular, forces the rational cohomology $H^\bullet (\M_{g,n})$ in
the stable range to be concentrated in even degrees. Taking invariants
of the $S_n$-action on cohomology does not affect these
statements. Now, given that the BV operator and antibracket have
degree 1, we obtain the following vanishing result.

\begin{theorem}
$1$. The BV operator
\[
H_k(\M_{g}(n+2)) \xrightarrow{\Delta} H_{k+1}(\M_{g+1}(n))
\]
vanishes for $k \le \frac{2}{3}(g-1)$, $g > 0$, $n \ge 0$. In
particular, $\Delta = 0$ on homology as $g \rightarrow \infty$ or $n
\to \infty$ and $k$ being fixed, \emph{i.e}., stably. In cohomology
with compact support,
\[
H^l_c(\M_{g}(n+2)) \xrightarrow{\Delta} H^{l+1}_c(\M_{g+1}(n))
\]
vanishes for $l \ge \frac{16}{3}g-\frac{4}{3}+2n$, $g > 0$, $n \ge 0$.

$2$. The antibracket
{\small
\[
H_{k_1}(\M_{g_1}(n_1+1)) \otimes H_{k_2} (\M_{g_2}(n_2+1))
\linebreak[0] \xrightarrow {\{-, -\}} H_{k_1 + k_2 +1}(\M_{g_1 +
  g_2}(n_1 + n_2))
\]
}vanishes for $k_1 \le \frac{2}{3}(g_1-1)$, $k_2 \le
\frac{2}{3}(g_2-1)$, $k_1 + k_2 \le \frac{2}{3}(g_1+g_2)-\frac{5}{3}$,
$g_1, g_2 >0$, and $n_1, n_2 \ge 0$. In cohomology with compact
support, the antibracket {\small
\[
H_c^{l_1}(\M_{g_1}(n_1+1)) \otimes H_c^{l_2} (\M_{g_2}(n_2+1))
\linebreak[0] \xrightarrow {\{-, -\}} H_c^{l_1 + l_2 +1}(\M_{g_1 +
  g_2}(n_1 + n_2))
\]
}vanishes when $l_1 \ge \frac{16}{3}g_1 -\frac{10}{3} +2n_1$, $l_2 \ge
\frac{16}{3}g_2 -\frac{10}{3} +2n_2$, $l_1 + l_2 \ge \frac{16}{3}(g_1
+ g_2) -\frac{19}{3} +2(n_1+n_2)$, $g_1, g_2 >0$, and $n_1, n_2 \ge
0$.
\end{theorem}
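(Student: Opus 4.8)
The plan is to pit the even-degree concentration of stable cohomology, recalled just above, against the odd (degree $+1$) nature of both operations. In the range of degrees $\le \frac{2}{3}(g-1)$ the isomorphism $\Q[\psi_1,\dots,\psi_n,\kappa_1,\kappa_2,\dots] \to H^\bullet(\M_{g,n})$ exhibits the cohomology as a polynomial algebra on even-degree generators, so it vanishes in odd degrees; taking $S_n$-invariants preserves this, giving $H^k(\M_g(n)) = 0$ for $k$ odd and $k \le \frac{2}{3}(g-1)$. Working over $\Q$, universal coefficients give $\dim H_k = \dim H^k$, so the same vanishing holds in homology. Finally, since the compact-support statements are the Poincar\'e--Lefschetz duals of the homology statements---under $H_k(\M_g(n)) \cong H^{6g-6+2n-k}_c(\M_g(n))$ the homological bounds transform exactly into the stated bounds on $l$---it suffices to prove the two homology versions.

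For the antibracket the argument is clean and complete. The three hypotheses $k_1 \le \frac{2}{3}(g_1-1)$, $k_2 \le \frac{2}{3}(g_2-1)$, and $k_1 + k_2 \le \frac{2}{3}(g_1+g_2) - \frac{5}{3}$ place both source factors and the target inside their respective stable ranges, the last because $k_1 + k_2 + 1 \le \frac{2}{3}(g_1+g_2) - \frac{2}{3}$ is precisely the stable range for genus $g_1+g_2$. For $\{-,-\}$ to be nonzero we would need $H_{k_1}(\M_{g_1}(n_1+1))$ and $H_{k_2}(\M_{g_2}(n_2+1))$ both nonzero, forcing $k_1$ and $k_2$ even, while the target $H_{k_1+k_2+1}(\M_{g_1+g_2}(n_1+n_2))$ being nonzero forces $k_1+k_2+1$ even, i.e.\ $k_1 + k_2$ odd. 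These parities are incompatible, so the antibracket vanishes.

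For the BV operator $\Delta : H_k(\M_g(n+2)) \to H_{k+1}(\M_{g+1}(n))$ I would split on the parity of $k$. When $k$ is odd and $k \le \frac{2}{3}(g-1)$, the source is odd-degree homology in the stable range of $\M_g(n+2)$, hence zero. When $k$ is even, the target $H_{k+1}(\M_{g+1}(n))$ is odd-degree and vanishes as soon as $k+1$ lies in the stable range for genus $g+1$, i.e.\ $k+1 \le \frac{2}{3}g$. This is the one delicate point and, I expect, the main obstacle: the target condition rearranges to $k \le \frac{2}{3}g - 1$, marginally stronger than the stated hypothesis $k \le \frac{2}{3}(g-1)$. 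An even integer satisfies the hypothesis but fails the target bound only when $g \equiv 1 \pmod 3$ and $k = \frac{2}{3}(g-1)$; in that case $k$ is automatically even and the target degree $k+1$ sits exactly one step beyond the range controlled by Harer--Ivanov stability.

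To close this single boundary case I would either invoke a sharper form of the stability bound (Boldsen, Randal-Williams) to push the even-degree concentration of the target one degree further, or verify directly that the lone odd homology group $H_{k+1}(\M_{g+1}(n))$ vanishes; every other value of $k$ is settled by the parity bookkeeping above. With the homology statements in hand, the compact-support statements in both parts follow verbatim from the Poincar\'e--Lefschetz dualities recorded in the first paragraph.
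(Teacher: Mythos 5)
Your argument is essentially the paper's own: the theorem is deduced directly from the even-degree concentration of stable rational cohomology (Harer--Ivanov--Boldsen--Randal-Williams stability plus Madsen--Weiss and Looijenga's relation) together with the fact that both operations have degree $+1$; the paper offers no proof beyond the paragraph preceding the statement. Your treatment of the antibracket is complete and correct --- the three hypotheses force both source factors and the target into their stable ranges, since $k_1+k_2+1 \le \frac{2}{3}(g_1+g_2-1)$, and the parity clash does the rest --- and your Poincar\'e--Lefschetz translations to the compact-support bounds reproduce the stated inequalities exactly.

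The substantive point is that you have correctly isolated a boundary case in part 1 that the paper's derivation passes over in silence: for even $k$ the parity argument needs the target degree $k+1$ to lie in the stable range of $\M_{g+1}$, i.e.\ $k \le \frac{2}{3}g-1$, which is strictly stronger than the stated hypothesis $k \le \frac{2}{3}(g-1)$ exactly when $g \equiv 1 \pmod 3$ and $k = \frac{2}{3}(g-1)$ (and that $k$ is then automatically even). Your proposal flags this but does not close it, and the suggested fix is not automatic: in this case $3(k+1) = 2(g+1)$, so the target degree lies in the surjectivity range but outside the isomorphism range of even the improved stability theorems of Boldsen and Randal-Williams, and surjectivity of the stabilization map alone does not force the odd-degree group $H_{k+1}(\M_{g+1}(n))$ to vanish. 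To make the argument airtight you should either supply a separate argument for this single degree or weaken the hypothesis of part 1 to $k \le \frac{2}{3}g - 1$ (equivalently, exclude the case $g \equiv 1 \pmod 3$, $k = \frac{2}{3}(g-1)$); the latter is all that the parity argument --- the paper's included --- actually delivers.
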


\subsection{A nonvanishing example}
Given a vanishing range of the BV operator and antibracket on rational
homology, one may wonder if they vanish identically. Concrete
computations with moduli spaces are quite hard in general, and we do
not have an example of nonvanishing of the antibracket. Here we
present an example of nonvanishing of the BV operator. First off, let
us analyze the differential in the spectral sequence \eqref{cohss}.

\begin{prop}
  \label{nonvan}
  The differential $d_1^{0,6}: H^6(\M_3) \to H^5(\om_3^{(1)} \setminus
  \om_3^{(2)})$ is nontrivial. Moreover, so is its projection to
  $H^5(\M_2(2))$.
\end{prop}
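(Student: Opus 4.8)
The plan is to run the cohomological spectral sequence \eqref{cohss} for $g=3$, $n=0$, whose bottom row begins with $E_1^{0,6}=H^6(\M_3)$ and which converges to the cohomology of the smooth and proper stack $\om_3$. The key input is Looijenga's computation $H^6(\M_3;\Q)=\Q$, together with the fact that its generator has top weight $12$; equivalently, it is the class detected by the graph-complex (tropical) computation of the top-weight cohomology of $\M_3$, and it does not extend across the boundary of $\om_3$. I would record this non-extension as the vanishing of the restriction $H^6(\om_3)\to H^6(\M_3)$: indeed $H^6(\om_3;\Q)$ is pure of weight $6$, the restriction is a morphism of mixed Hodge structures, and $H^6(\M_3)$ is pure of weight $12$.

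Next I would feed this into the edge homomorphism of \eqref{cohss}. The term $E_\infty^{0,6}$ is canonically the image of the restriction $H^6(\om_3)\to H^6(\M_3)=E_1^{0,6}$, hence $E_\infty^{0,6}=0$ while $E_1^{0,6}=\Q$. Therefore the generator of $H^6(\M_3)$ is annihilated by one of the outgoing differentials $d_r^{0,6}\colon E_r^{0,6}\to E_r^{r,7-r}$ with $r\ge1$; it remains to show that the responsible one is $d_1$.

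To isolate $d_1$ I would track weights. The spectral sequence \eqref{cohss} is Poincar\'e--Lefschetz dual to \eqref{hss}, the spectral sequence of the stratification of $\om_3$ by number of nodes, which underlies a spectral sequence of mixed Hodge structures; the duality introduces a Tate twist by the complex dimension of each stratum, so that on each page the differential $d_r^{0,6}$ lowers weight by $2r$. Since our generator has weight $12$, a nonzero $d_r^{0,6}$ would land in weight $12-2r$ inside $E_1^{r,7-r}=H^{7-2r}(\om_3^{(r)}\setminus\om_3^{(r+1)})$. But the weights on $H^{7-2r}$ of any variety are bounded by $2(7-2r)=14-4r$, and $14-4r<12-2r$ for every $r\ge2$, so every higher differential vanishes on the class. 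Consequently it must be killed by $d_1^{0,6}$, which is therefore nontrivial, and its image spans a line in the weight-$10$ (top-weight) part of $H^5(\om_3^{(1)}\setminus\om_3^{(2)})$.

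For the refined statement I would use the splitting $\om_3^{(1)}\setminus\om_3^{(2)}=\M_2(2)\sqcup(\M_{1,1}\times\M_{2,1})$ into the non-separating and separating one-node loci. Since the image of $d_1^{0,6}$ is concentrated in weight $10$, it suffices to show that the separating factor carries no weight-$10$ class in $H^5$: as $\M_{1,1}$ is rationally acyclic in positive degrees, K\"unneth gives $H^5(\M_{1,1}\times\M_{2,1};\Q)\cong H^5(\M_{2,1};\Q)$, whose weights are bounded above by $2\dim\M_{2,1}=8<10$. Hence this summand contains no weight-$10$ class, and the nonzero image of $d_1^{0,6}$ lies entirely in $\mathrm{Gr}^W_{10}H^5(\M_2(2))$, so the projection to $H^5(\M_2(2))$ is nontrivial. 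The main obstacle is the weight bookkeeping in the third paragraph---verifying that \eqref{cohss} genuinely underlies a spectral sequence of mixed Hodge structures whose duality-twisted differentials lower weight by $2r$---and, as its essential input, confirming that the generator of $H^6(\M_3)$ is of top weight $12$; a more computational but weight-free alternative would be to prove the vanishing of the subquotients $E_2^{2,5}$ and $E_3^{3,4}$ directly, by analyzing $H^3$ and $H^1$ of the two- and three-node strata.
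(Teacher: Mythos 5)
Your proposal is correct, and its opening move is the same as the paper's: both use Looijenga's computation that $H^6(\M_3)\cong\Q$ is pure of weight $12$, the purity of weight $6$ of $H^6(\om_3)$, and the identification of $E_\infty^{0,6}$ with the image of the restriction $H^6(\om_3)\to H^6(\M_3)$ to conclude that some differential $d_r^{0,6}$ must kill the generator. Where you genuinely diverge is in how the higher differentials are disposed of and how the separating component is eliminated. The paper does both by explicit cohomology computations: it lists the components of the deeper strata (quotients of $\M_{1,4}$, $\M_{2,1}\times\M_{0,3}$, $\M_{1,3}\times\M_{1,1}$, $\M_{1,2}\times\M_{1,1}\times\M_{1,1}$) and checks the relevant groups vanish, and for the separating one-node locus it invokes Petersen's argument that $\M_{2,1}$ has the rational cohomology of $\CPP$, so that $H^5(\M_{2,1}\times\M_{1,1})=0$ outright. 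You instead run a single weight argument: the differentials of the stratification spectral sequence are strict morphisms of mixed Hodge structures, so after the Tate twists the class must land in the weight-$(12-2r)$ part of $H^{7-2r}$ of the codimension-$r$ stratum, which vanishes for $r\ge 2$ by the bound $\mathrm{Gr}^W_jH^k=0$ for $j>2k$, and the same bookkeeping (weights of $H^k(U)$ at most $2\dim U$ for $U$ smooth) kills the weight-$10$ part of $H^5(\M_{2,1}\times\M_{1,1})$. Your route buys uniformity --- no case-by-case analysis of boundary strata, and it would survive even if those strata had nonzero cohomology in the relevant degree but wrong weight --- at the price of justifying that \eqref{hss} underlies a spectral sequence of mixed Hodge structures, a standard fact the paper never needs. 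One point worth flagging in your favor: your identification of the target of $d_r^{0,6}$ as a subquotient of $E_1^{r,7-r}=H^{7-2r}(\om_3^{(r)}\setminus\om_3^{(r+1)})$ is the one dictated by the homology spectral sequence \eqref{hss}, where $d^r_{0,6}$ lands in a subquotient of $H_5(\om_3^{(r)},\om_3^{(r+1)})\cong H^{7-2r}$ of the open stratum of real dimension $12-2r$; the paper's proof instead analyzes $H^5$ of those strata, i.e., uses the bidegree convention $E_r^{p,q}\to E_r^{p+r,q+r-1}$ rather than $E_r^{p,q}\to E_r^{p+r,q-r+1}$. Your weight argument depends on having this degree right, and you do.
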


\begin{proof}
It suffices to prove the second statement. By Looijenga's computation
\cite{Loo1} of the rational cohomology of $\M_3$, its
\emph{Poincar\'e-Serre polynomial}, in which the coefficient by $t^k
u^l$ is the dimension of the subquotient $H^k(\M_3)$ of weight $l$, is
equal to $1 + t^2u^2 + t^6u^{12}$. This implies that $H^6(\M_3) \cong
\Q$ of weight 12. Since the weight is not equal to the cohomology
degree, the corresponding element in $E_1^{0,6}$ will not survive to
$H^6(\om_3)$ in the limit $E_{\infty}^{0,6}$. If all the differentials
$d_r^{0,6}$, $r \ge 1$, were zero on $H^6 (\M_3)$, then $H^6 (\M_3)$
would contribute nontrivially to $E_\infty^{0,6} \subset H^6 (\om_3)$
and thereby have weight 6, which would be contradiction. The plan is
to show that all the higher differentials $d_r^{0,6}$, $r \ge 2$,
vanish. This would force $d_1^{0,6}$ to be nontrivial.

Now, looking at different components of the boundary $\bd \M_3$, we
find that they all have a trivial cohomology group $H^5$, except
possibly $\M_2(2)$. Indeed, $\om_3^{(1)} \setminus \om_3^{(2)} =
\M_2(2) \coprod \M_{2,1} \times \M_{1,1}$. The space $\M_{1,1}$ of
elliptic curves is known to be isomorphic to the affine line $\C$,
whereas the following argument, borrowed from Dan Petersen
\cite{Peters}, shows that $\M_{2,1}$ has the rational homology of
$\CPP$. Indeed, consider the Leray-Serre spectral sequence for the
forgetful map $\pi: \M_{2,1} \to \M_2$. The base $\M_2$ is isomorphic
to $\M_0(6)$, because every curve of genus 2 is hyperelliptic. We know
from Theorem \ref{genus-zero} that it has the cohomology of a
singleton. On the other hand, the local systems $R^0 \pi_* \Q$ and
$R^2 \pi_* \Q$ are trivial, while $R^1 \pi_* \Q$ has no rational
cohomology, as the curve representing a point of $\M_2$ has the
hyperelliptic involution, which acts on the fiber of $R^1 \pi_* \Q$ by
$-1$.

We conclude that the projection of $d_1^{0,6}: H^6(\M_3) \to
H^5(\om_3^{(1)} \setminus \om_3^{(2)})$ onto $H^5(\M_{2,1} \times
\M_{1,1}) = 0$ must be zero. If we show that the higher differentials
$d_r^{0,6}$, $r \ge 2$, vanish, it will imply that the projection of
$d_1^{0,6}: H^6(\M_3) \to H^5(\om_3^{(1)} \setminus \om_3^{(2)})$ to
$H^5(\M_2(2))$ is nontrivial.

For each $r \ge 2$, the higher differential $d_r^{0,6}$ maps
$E_r^{0,6}$, which is a subspace of $E_1^{0,6}$, to $E_r^{r,5+r}$,
which is a subquotient of $E_1^{r,5+r}$. We claim that all these terms
$E_1^{r,5+r}$ are zero. For $r = 2$, the term $E_1^{r,5+r} = E_1^{2,7}
= H^5 (\om_3^{(2)} \setminus \om_3^{(3)})$ is the direct sum of the
cohomology groups of connected components of $\om_3^{(2)} \setminus
\om_3^{(3)})$. These components are quotients under finite group
actions of the following spaces:
\[
\M_{1,4}, \quad \M_{2,1} \times \M_{0,3}, \quad \M_{1,3} \times
\M_{1,1}, \quad \M_{1,2} \times \M_{1,1} \times \M_{1,1}.
\]
Looking at the forgetful map $\M_{1,4} \to \M_{1,1}$, which is
topologically a fiber bundle with fiber of the homotopy type of a
three-dimensional CW complex and base having the homotopy type of a
point, we see that $H^5(\M_{1,4}) = 0$. We have already seen that
$\M_{2,1}$ has the rational homology of $\CPP$, and so does $\M_{2,1}
\times \M_{0,3}$. Similar to $\M_{1,4}$, the space $\M_{1,3}$ has the
homotopy type of a CW complex of dimension two, and so does $\M_{1,3}
\times \M_{1,1}$. A similiar argument works for $\M_{1,2} \times
\M_{1,1} \times \M_{1,1}$.

Analyzing similarly the groups $E_1^{r,5+r} = H^5 (\om_3^{(r)}
\setminus \om_3^{(r+1)})$ for $r \ge 3$, we quickly see that all of
them vanish for dimensional reasons.
\end{proof}

\begin{cor}
  \label{counter}
  The BV operator
  \[
  \Delta: H_5(\M_2(2)) \to H_6(\M_3)
  \]
  does not vanish and is moreover an isomorphism between these
  one-di\-men\-sion\-al vector spaces over $\Q$.
\end{cor}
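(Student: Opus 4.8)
The plan is to derive the corollary formally from Theorem~\ref{delta-gysin} and Proposition~\ref{nonvan}, once I have checked that both $H_5(\M_2(2))$ and $H_6(\M_3)$ are one-dimensional over $\Q$. Since a nonzero $\Q$-linear map between one-dimensional spaces is automatically invertible, it suffices to prove three things: that the target is $\cong\Q$, that the source is $\cong\Q$, and that $\Delta$ is nonzero. The target is immediate: Looijenga's Poincar\'e--Serre polynomial for $\M_3$ (already used in Proposition~\ref{nonvan}) gives $H^6(\M_3)\cong\Q$, whence $H_6(\M_3)\cong\Q$ by the universal coefficient theorem over $\Q$.

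Non-vanishing is where Theorem~\ref{delta-gysin}(1) enters: it identifies $\Delta$ with the composition $(d_1^{0,\bullet})^*\alpha_0$, where $\alpha_0=\pi_*\rho^!$ comes from diagram~\eqref{alpha}. In the present case $g=2$, $n=0$ there are no ``first $n$'' punctures, so $\rho$ is the identity and $\alpha_0=\pi_*$, where $\pi$ glues the two unordered punctures of a genus-$2$ surface into a non-separating node. Under the decomposition $\om_3^{(1)}\setminus\om_3^{(2)}=\M_2(2)\coprod\M_{2,1}\times\M_{1,1}$ recorded in the proof of Proposition~\ref{nonvan}, the map $\pi$ is an isomorphism of stacks onto the component $\M_2(2)$ (the two branches of the node carrying exactly the $S_2$-symmetry that was quotiented out), and that same proof shows $H^5$, hence $H_5$, of the other component vanishes. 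Thus $\alpha_0\colon H_5(\M_2(2))\to H_5(\om_3^{(1)}\setminus\om_3^{(2)})$ is an isomorphism. Since Proposition~\ref{nonvan} asserts precisely that the projection of $d_1^{0,6}$ to $H^5(\M_2(2))$ is nonzero, its homology dual $(d_1^{0,6})^*$ is nonzero, and therefore $\Delta=(d_1^{0,6})^*\alpha_0\neq0$.

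It remains to show $H_5(\M_2(2))=H_5(\M_{2,2})^{S_2}\cong\Q$. The lower bound is free: because $H^6(\M_3)\cong\Q$, the nonzero differential $d_1^{0,6}$ of Proposition~\ref{nonvan} embeds $\Q$ into $H^5(\M_2(2))$, so $\dim H_5(\M_2(2))\ge1$. For the upper bound I would compute $H^5(\M_{2,2})$ directly by running the Leray--Serre spectral sequences of the forgetful tower $\M_{2,2}\to\M_{2,1}\to\M_2$: here $H^\bullet(\M_2;\Q)=\Q$ by the hyperelliptic identification $\M_2\cong\M_0(6)$ together with Theorem~\ref{genus-zero}, the space $\M_{2,1}$ has the rational homology of $\CPP$, and the only remaining input is the cohomology of $\M_2$ with coefficients in the rank-$4$ symplectic local system $H^1$ of the universal genus-$2$ curve. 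Taking $S_2$-invariants throughout and tracking weights exactly as in Proposition~\ref{nonvan}, I expect only a single weight-$10$ class to survive in degree $5$.

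I expect this last upper bound, $\dim H_5(\M_2(2))\le1$, to be the main obstacle; the non-vanishing and the lower bound are formal consequences of what has already been proven, whereas pinning the dimension down exactly requires genuine control of the local-system cohomology of $\M_2$ (equivalently, of the low-degree Betti numbers of $\M_{2,2}$), which I would either extract from the forgetful-fibration analysis above or import from the known computations of the rational cohomology of $\M_{2,n}$. Once both dimensions equal $1$, the already-established non-vanishing immediately upgrades $\Delta$ to an isomorphism, with no further work.
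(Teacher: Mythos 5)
Your identification of $H_6(\M_3)\cong\Q$ and your non-vanishing argument are exactly the paper's: by Theorem~\ref{delta-gysin}, $\Delta=(d_1^{0,6})^*\alpha_0$ with $\rho=\operatorname{id}$ and $\pi$ the inclusion of $\M_2(2)$ as a connected component of $\om_3^{(1)}\setminus\om_3^{(2)}$, so $\Delta$ is the linear dual of the projection of $d_1^{0,6}$ to $H^5(\M_2(2))$, which Proposition~\ref{nonvan} shows is nonzero. Your lower bound $\dim H_5(\M_2(2))\ge 1$, obtained by noting that a nonzero map out of the one-dimensional space $H^6(\M_3)$ is injective, is a nice observation that the paper does not need to make.

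The gap is the upper bound $\dim H_5(\M_2(2))\le 1$, without which the ``isomorphism between one-dimensional vector spaces'' part of the statement is not established. You flag this yourself, and your proposed route --- the Leray--Serre spectral sequences of the tower $\M_{2,2}\to\M_{2,1}\to\M_2$ --- is not carried out: it requires the cohomology of $\M_{2,1}$ with coefficients in the rank-four weight-one local system of the universal genus-$2$ curve (the fiber of $\M_{2,2}\to\M_{2,1}$ being a once-punctured genus-$2$ curve, its $H^1$ contributes exactly this local system), and you supply no computation of that input, only the expectation that a single weight-$10$ class survives in degree $5$. The paper closes this step by citing Tommasi's computation \cite[Corollary III.2.2]{tommasi:thesis} of the Poincar\'e--Serre polynomial of $\M_2(2)$ as $1+t^2u^2+t^5u^{10}$, which gives $H^5(\M_2(2))=\Q$ outright. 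Either import that result (or an equivalent computation of the Betti numbers of $\M_{2,n}$ from the literature) or actually execute the local-system computation; as written, the dimension count is asserted rather than proved.
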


\begin{proof}
Working at the dual, cohomology level in the proof of Proposition
\ref{nonvan}, we have seen that $H^6(\M_3)$ is
one-di\-men\-sion\-al. We also have $H^5(\M_2(2)) = \Q$, given
Tommasi's computation \cite[Corollary III.2.2]{tommasi:thesis} of the
Poincar\'{e}-Serre polynomial of $\M_2(2)$ as $1+t^2u^2 + t^5
u^{10}$.

Thus, to prove our claim, we only need to see that $\Delta \ne 0$. By
Theorem \ref{delta-gysin}, we know that $\Delta = (d_1^{0,6})^*
\alpha_0$. Recall that $\alpha_0 = \pi_* \rho^!$, where $\pi$ and
$\rho$ were given in diagram \eqref{alpha}. Note that in this
particular case, $\rho = \operatorname{id}$ and $\pi$ is the inclusion
of $\M_2(2)$ as a connected component of $\om_3^{(1)} \setminus
\om_3^{(2)}$. Thus, $\Delta = (d_1^{0,6})^* \pi_*$, which is exactly
the linear dual of the projection of $d_1^{0,6}$ to $H^5(\M_2(2))$,
whose nonvanishing is proven in Proposition \ref{nonvan}.
  \end{proof}



\begin{thebibliography}{9999}


\bibitem{Arn} V. I. Arnol’d, \emph{Certain topological invariants of
  algebraic functions}, (Russian) Trudy Moskov.\ Mat.\ Obsc. 21 (1970)
  27--46; translation in Trans.\ Moscow Math.\ Soc. 21 (1970) 30--52.

\bibitem{barannikov} S.~Barannikov, \emph{Modular operads and
  {B}atalin-{V}ilkovisky geometry}, Int.  Math. Res. Not. IMRN (2007),
  no.~19, Art.\ ID rnm075, 31.

\bibitem{Bold} S. Boldsen, \emph{Improved homological stability for
  the mapping class group with integral or twisted coefficients},
  Mathematische Zeitschrift 270 (2012), no. 1--2, 297--329.

\bibitem{Bri} E. Brieskorn, \emph{Sur les groupes de tresses},
  Séminaire Bourbaki 14 (1971--1972) 21--44.

\bibitem{CFL} K. Cieliebak, K. Fukaya, J. Latschev, \emph{Homological
  algebra related to surfaces with boundary}, Preprint,
  \url{arXiv:1508.02741}.

\bibitem{CL} K. Cieliebak, J. Latschev, \emph{The role of string
  topology in symplectic field theory}, New perspectives and
  challenges in symplectic field theory, CRM Proc.\ Lecture Notes, 49,
  Amer.\ Math.\ Soc., Providence, RI, 2009, pp.\ 113--146.

\bibitem{Cos} K.~Costello, {\em The partition function of a
  topological field theory}, Journal of Topology 2 (2009) 779--822.

\bibitem{DII} P.~Deligne, {\em Th{\'e}orie de Hodge : II},
  Publications Math{\'e}matiques de l'I.H.\'{E}.S., 40 (1971) 5--57.

\bibitem{DM} M. Doubek, M. Markl, \emph{Open-closed modular operads,
  the Cardy condition and string field theory}, J. Noncommut. Geom. 12
  (2018), no.\ 4, 1359--1424.

\bibitem{Eliashberg-Givental-Hofer} Y. Eliashberg, A. Givental,
  H. Hofer, \emph{Introduction to symplectic field theory}, GAFA 2000
  (Tel Aviv, 1999), Geom. Funct. Anal. Special Volume, Part II (2000),
  560--673.

\bibitem{Getz} E.~Getzler, {\em Operads and moduli spaces of genus 0
  Riemann surfaces}, The moduli space of curves, Birkh\"{a}user
  Boston, 1995, pp.\ 199--230.


\bibitem{Getz2} E. Getzler, {\em Topological Recursion Relations in
  Genus 2}, Integrable systems and algebraic geometry (Kobe/Kyoto,
  1997), World Sci. Publishing, River Edge, NJ, 1998, pp. 73--106.


\bibitem{HarerStab} J.~L. Harer, \emph{Stability of the homology of
  the mapping class groups of orientable surfaces}, Ann.\ of
  Math. (2), 121(2), 215--249, 1985.

\bibitem{Harer} J.~L. Harer, \emph{The virtual cohomological dimension
  of the mapping class group of an orientable surface},
  Invent.\ Math. 84 (1986) 157--176.

\bibitem{HarerPrep} J.~L. Harer, \emph{Improved stability for the
  homology of the mapping class groups of surfaces}, Preprint, 1993.

\bibitem{HVZ} E. Harrelson, A. Voronov, J. Zuniga, \emph{Open-closed
  moduli spaces and related algebraic structures},
  Lett. Math. Phys. 94 (2010), no.\ 1, 1--26.

\bibitem{IvaCx} N. V. Ivanov, \emph{Complexes of curves and the
  Teichm\"{u}ller modular group}. (Russian) Uspekhi Mat. Nauk, 42(3),
  49--91, 1987; translation in Russian Math.\ Surveys, 42(3), 55--107,
  1987.

\bibitem{IvaStab} N. V. Ivanov, \emph{Stabilization of the homology of
  Teichm\"{u}ller modular groups}. (Russian) Algebra i Analiz, 1(3),
  110--126, 1989; translation in Leningrad Math.\ J., 1(3), 675--691,
  1990.

\bibitem{IvaOnt} N. V. Ivanov, \emph{On the homology stability for
  Teichm\"{u}ller modular groups: closed surfaces and twisted
  coefficients}, Mapping class groups and moduli spaces of Riemann
  surfaces (G\"ottingen, 1991/Seattle, WA, 1991), Contemp.\ Math.,
  150, Amer.\ Math.\ Soc., Providence, RI, 1993, pp.\ 149--194.

\bibitem{JM} B. Jur\v{c}o, K. M\"{u}nster, \emph{Type II superstring
  field theory: geometric approach and operadic description}, J. High
  Energy Phys. 2013, no. 4, 126, front matter + 36 pp.

\bibitem{KKP} L. Katzarkov, M. Kontsevich, T. Pantev, \emph{Hodge
  theoretic aspects of mirror symmetry}, From Hodge theory to
  integrability and TQFT tt*-geometry, Proc.\ Sympos.\ Pure Math., 78,
  Amer.\ Math.\ Soc., Providence, RI, 2008, pp.\ 87--174.
  
\bibitem{Kauf} R. Kaufmann, \emph{Open/closed string topology and
  moduli space actions via open/closed Hochschild actions}, SIGMA
  Symmetry Integrability Geom. Methods Appl. 6 (2010), Paper 036, 33
  pp.

\bibitem{KWZ} R. Kaufmann, B. Ward, J. Zuniga, \emph{The odd origin of
  Gerstenhaber brackets, Batalin-Vilkovisky operators, and master
  equations}, J. Math. Phys. 56 (2015), no.\ 10, 103504, 40 pp.


\bibitem{KSV} T.~Kimura, J.~Stasheff, A.~A. Voronov, {\em On operad
  structures of moduli spaces and string theory},
  Comm.\ Math.\ Phys. 171 (1995) 1--25.

\bibitem{KSV2} T.~Kimura, J.~Stasheff, A.~A. Voronov, {\em Homology of
  moduli spaces of curves and commutative homotopy algebras}, The
  Gelfand Mathematical Seminars, 1993--1995, Birkh\"{a}user Boston,
  1996, pp.\ 151--170.

\bibitem{LLS} C.~Li, S.~Li, K.~Saito, \emph{Primitive forms via
  polyvector fields}, Preprint, \url{arXiv:1311.1659 [math.AG]}.

\bibitem{Loo1} E. Looijenga, \emph{Cohomology of $\M_3$ and $\M_3^1$},
  Mapping class groups and moduli spaces of Riemann surfaces
  (G\"ottingen, 1991/Seattle, WA, 1991), Contemp.\ Math., 150,
  Amer.\ Math.\ Soc., Providence, RI, 1993, pp.\ 205--228.

\bibitem{Loo} E. Looijenga, \emph{Stable cohomology of the mapping
  class group with symplectic coefficients and of the universal
  Abel-Jacobi map}, Journal of Algebraic Geometry 5 (1996), no.\ 1,
  135--150.


\bibitem{MW} I. Madsen and M. Weiss, \emph{The stable moduli space of
  Riemann surfaces: Mumford’s conjecture}, Annals of Mathematics
  (2007), 843--941.


\bibitem{Markl} M. Markl, \emph{Loop Homotopy Algebras in Closed
  String Field Theory}, Commun. Math. Phys. 221, 367--384 (2001).


\bibitem{Mum} D. Mumford, \emph{Abelian quotients of the
  Teichm\"{u}ller modular group}, Journal d'Analyse Math\'{e}matique
  18 (1967), no. 1, 227--244.


\bibitem{MuSA} K. M\"{u}nster, I. Sachs, \emph{Quantum open-closed
  homotopy algebra and string field theory}, Comm. Math. Phys. 321
  (2013), no. 3, 769--801.


  \bibitem{Peters} D.~Petersen, \emph{Betti numbers of moduli spaces
        of smooth Riemann surfaces}, version: 2012-02-15,
      \url{https://mathoverflow.net/q/88515}.

\bibitem{RW} O.~Randal-Williams, \emph{Resolutions of moduli spaces
  and homological stability}, JEMS 18 (2016), no.\ 1, 1--81.

\bibitem{saito:moduli} K.~Saito, \emph{Moduli space for Fuchsian
  groups}, Algebraic analysis, Vol.~II, Academic Press, Boston, 1988,
  pp.\ 735--787.

\bibitem{saito:higher} K.~Saito, \emph{Higher Eichler integrals and
  vector bundles over the moduli of spinned Riemann surfaces},
  Prospects in complex geometry (Katata and Kyoto, 1989), Lecture
  Notes in Math., 1468, Springer, Berlin, 1991, pp.\ 408--421.

\bibitem{saito:the} K.~Saito, \emph{The Teichmüller space and a
  certain modular function from a viewpoint of group representations},
  Algebraic geometry and related topics (Inchon, 1992),
  Conf.\ Proc.\ Lecture Notes Algebraic Geom., I, Int.\ Press,
  Cambridge, MA, 1993, pp.\ 41--88.

\bibitem{saito:algebraic} K.~Saito, \emph{Algebraic representations of
  Teichmüller space}, Workshop on Geometry and Topology (Hanoi, 1993),
  Kodai Math.\ J. 17 (1994), no.\ 3, 609--626.
  
\bibitem{Sen-Zw} A. Sen, B. Zwiebach, \emph{Quantum background
   independence of closed-string field theory}, Nuclear Phys. B 423
   (1994), no. 2--3, 580--630.

 \bibitem{terilla:smoothness} J. Terilla, \emph{Smoothness theorem for
   differential BV algebras}, J. Topol. 1 (2008), no.\ 3, 693--702.

 \bibitem{tommasi:thesis} O. Tommasi, \emph{Geometry of Discriminants
   and Cohomology of Moduli Spaces}, Ph.D. thesis, Radboud
   Universiteit Nijmegen, The Netherlands, 2005, 89 pp.,
   \url{http://hdl.handle.net/2066/27027}.

   \bibitem{QDT2} A.~A. Voronov, \emph{Quantizing deformation theory
  {II}}, {P}reprint IPMU18-0117, Kavli IPMU, 2018,
  \url{arXiv:1806.11197 [math.QA]}, to appear in Pure Appl.\ Math.\ Q.

\bibitem{West} C. Westerland, \emph{Configuration spaces in topology
  and geometry}, Gazette of the Australian Mathematical Society, 38
  (2011), no. 5, 279--283.

\bibitem{Witt} E. Witten, \emph{On background-independent open-string
  field theory}, Phys.\ Rev.\ D (3) 46 (1992), no.\ 12, 5467--5473.


\bibitem{Zw} B.~Zwiebach, {\em Closed string field theory: Quantum
  action and the Batalin-Vilkovisky master equation}, Nuclear Physics
  B 390 (1993) 33--152.



\end{thebibliography}
\end{document}